\definecolor{darkred}{rgb}{0.9,0.1,0.1}
\newtheorem{proposition}{Proposition}
\newtheorem{theorem}{Theorem}
\newtheorem{lemma}{Lemma}
\newtheorem{remark}{Remark}
{\rm}
\definecolor{darkred}{rgb}{0.9,0.1,0.1}
\def\eps{\varepsilon}
\author{A. Piatnitski$^{\small\bf a,b}$ and E. Zhizhina$^{\small\bf a}$\\
\\
{
\small $^{\small\bf a}$Institute for Information Transmission Problems of RAS,}\\[-1.5mm]
{\small 19, Bolshoi Karetnyi per. build.1,}\\[-1.5mm]
{\small 127051 Moscow, Russia}\\
{\small $^{\small\bf b}$The Arctic University of Norway, Campus Narvik,}\\[-1.5mm]
{\small P.O.Box 385, Narvik 8505, Norway}\\
$ $}
\def\eps{\varepsilon}
\begin{document}

\title{Large time behaviour of symmetric random walk in high-contrast periodic environment\thanks{The research  has been partially supported by the Russian Science Foundation (project No. 14-50-00150)}}

\maketitle

\begin{abstract}
The paper deals with the asymptotic properties of a symmetric random walk in a high contrast periodic medium in $\mathbb Z^d$, $d\geq 1$.
We show that under proper diffusive scaling  the random walk exhibits a non-standard limit behaviour.
In addition to the coordinate of the random walk in $\mathbb Z^d$ we introduce an extra variable that characterizes the position of the random walk in the period and show that this two-component process
converges in law to a limit Markov process. The components of the limit process are mutually coupled, thus we cannot expect that the limit behaviour of the coordinate process is Markov.
We also prove the convergence in the path space for the said random walk.
\end{abstract}

\noindent

\section*{Introduction}

We study in this work the large time behaviour of a symmetric random walk in $\mathbb Z^d$, $d\geq 1$,
under the assumptions that the medium is periodic, elliptic and high-contrast. More precisely, we assume that the transition probabilities of the random walk depend on a small parameter $\varepsilon>0$ and
that they are of order one for some links on the period and of order $\varepsilon^2$ for other links. It is assumed, moreover, that the graph of links of order one forms an unbounded connected set in $\mathbb Z^d$. Denoting this random walk $\widehat X(n)$ we study the limit behaviour of the process
$\widehat X_\varepsilon(t)=\varepsilon \widehat X([t/\varepsilon^2])$, as $\varepsilon\to 0$.

Various phenomena in media with a high-contrast microstructure have been widely studied by the specialists
in applied sciences and then since '90th high-contrast homogenization problems have been attracting the attention of mathematicians.

Homogenization problems for partial differential equations describing high-contrast periodic media have
been widely investigated in the existing mathematical literature. In the pioneer work \cite{ADH90} a parabolic equation with high-contrast periodic coefficients has been considered. It was shown that the effective equation contains a non-local in time term which represents the memory effect. In the literature on porous media these models are usually called
double porosity models. Later on in \cite{Ale92}, 
with the help of two-scale convergence techniques, it was proved that the solutions of the original parabolic equations two-scale converge to a function which depends both on slow and fast variables, and, as a function of fast and slow variables, satisfies a system of local PDEs.

In the case of spectral problems the homogenized spectral problem turns out to be non-linear with respect to the spectral parameter. The convergence of spectra and the structure of the limit operator pencils
have been considered in \cite{Zhi00}, \cite{BKS08} and other works.

A number of works have been devoted to nonlinear double porosity models, see \cite{BLM}, \cite{BCP04}
and references therein. In particular, for the evolution nonlinear models the memory effect was also observed.

In the discrete setting homogenization problems for high-contrast equations and Lagrangians were studied in \cite{BCP15}.
For evolution high-contrast difference equations the two-scale limit of solutions is a function
of continuous "slow" \ variables and discrete "fast" \ variables.

The appearance of a non-local term in the homogenized equation means that the limit in law of
the scaled random walks need not be a Markov process. Our goal is to study the large time behaviour
of the random walk $\widehat X(n)$. It turns out that in order to keep the Markovity of the limit process
one can equip the coordinate process $X(n)$ with an additional variable, $k(\widehat X(n))$, that specifies
the position of the random walk in the period. Although in the original process $(\widehat X(n), k(\widehat X(n)))$
the last component is a function of $\widehat X(n)$, in the limit process the last component is independent
of the other component.

The limit process  is a two-component continuous time Markov process $\mathcal{X}(t) = (\widehat {\mathcal{X}}(t), k(t))$, its  first component $\widehat{\mathcal{X}}(t)$  lives in the space $\mathbb R^d$, while the second component is a jump Markov process $k(t)$ with a finite number of states $k(t) \in \{0,1, \ldots, M \}$. The process $k(t)$
does not depend on $\widehat{\mathcal{X}}(t)$;  the  intensities $\lambda(k)$ and transition probabilities $\mu_{k j}, \; k\neq j, \ k,j = 0,1, \ldots, M,$  of its jumps are expressed in terms of the transition
probabilities of the
original symmetric random walk.  When $k(t) = 0$,  the first
component $\widehat{\mathcal{X}}(t)$ evolves along the trajectories of a Brownian motion in $\mathbb R^d$, but when $k(t) \neq 0$, then the first
component remains still until the second component of the process
takes again the value equal to 0. Thus the trajectories of  $\widehat{\mathcal{X}}(t)$  coincide with  the trajectories of a Brownian motion in $\mathbb R^d$ on those time intervals where  $k(t)=0$.  As long as $k(t) \neq 0$, then $\widehat{\mathcal{X}}(t)$ does
not move, and only the second component of the process evolves, that is, figuratively speaking, the process
lives during this period in the "astral" space $A= \{x_1,\ldots,  x_M \}$.

We also study the generalization of this model to the case of several fast
components. More precisely, we assume that the set of links to which transition probabilities of order one are assigned consists of a finite number of non-intersecting unbounded connected components.
In this case  we also equip the random walk with an additional variable, however it indicates not only whether the random walk is in the "astral" space or not, but also specifies the "fast" \ subset to which the random walk belongs.
  Also we associate to each fast component the corresponding effective covariance matrix. The limit two-component  process is Markov, its second coordinate is a Markov jump process with a finite number of states. When the second coordinate indicates the "astral" \ state, the first one does not move. Otherwise, the first coordinate is a diffusion in $\mathbb R^d$, however its covariance matrix depends on the value of the second coordinate.

Our approach relies on approximation results from \cite{EK}.  A crucial step here is constructing several periodic correctors which are introduced as solutions of auxiliary difference elliptic equations on the period. The coefficients of the corresponding difference operator on the discrete torus are defined as the transition probabilities of $\widehat X(n)$ with $\varepsilon=0$.
Earlier the corrector techniques in the discrete framework have been developed in \cite{Ko86} for proving the homogenization results for uniformly elliptic difference schemes.

We prove the convergence, as $\varepsilon\to0$, of semigroups generated by $(\widehat X_\varepsilon(t),\,k(\widehat X_\varepsilon(t)))$ and determine the generator of the
limit semigroup. This yields the convergence of finite dimensional distributions of $(\widehat X_\varepsilon(t),\,k(\widehat X_\varepsilon(t)))$.
We then improve this result and show that $(\widehat X_\varepsilon(t),\,k(\widehat X_\varepsilon(t)))$ converges in law in the topology
of $D[0,\infty)$.

It is interesting to observe that, unlike  diffusion models, the high-contrast discrete
models are feasible in any dimension including $d=1$, at the price of admitting not only nearest neighbour interactions. The graph of non-vanishing transition probabilities should be large enough
to ensure the existence of unbounded connected component.

\section{Problem setup}

\bigskip


We consider a symmetric random walk $\widehat X(n)$ on $\mathbb Z^d, \ d \ge 1$, with transition probabilities
$p(x,y)=\Pr (x \to y)$,  $(x,y) \in \mathbb Z^d \times  \mathbb Z^d$:
\begin{equation}\label{p}
p(x,y) = p(y,x), \quad (x,y) \in \mathbb Z^d \times  \mathbb Z^d; \quad \sum_{y \in \mathbb Z^d} p(x,y)=1 \quad  \forall x \in \mathbb Z^d.
\end{equation}

We assume that the random walk satisfies the following properties:
\begin{itemize}
\item [-] {\it Periodicity}. The functions $p(x,x+\xi)$ are  are periodic in $x$  with a period $Y$ for all $\xi\in\mathbb Z^d$. In what follows we identify the period $Y$ with the corresponding $d$-dimensional discrete torus $\mathbb T^d$.
\item [-] {\it Finite range of interactions}. There exists $c_1>0$ such that
\begin{equation}\label{c1}
p(x,x+\xi)=0, \quad \hbox{if }|\xi|>c_1.
\end{equation}
\item [-] {\it Irreducibility.}  The random walk is irreducible in $\mathbb Z^d$.
\end{itemize}
We denote the transition matrix of the random walk by $P =\{ p(x,y), \ x,y \in \mathbb Z^d \}$.

\medskip
In this paper we consider a family of transition probabilities $p^{(\varepsilon)}(x,y)$ that satisfy the properties formulated above and depend on a small parameter $\varepsilon>0$.
These transition probabilities describe  the so-called high-contrast periodic structure of the environment. We suppose that the transition matrix $P^{(\varepsilon)}$ is a small perturbation
of a fixed transition matrix $P^0$ and can be represented as
\begin{equation}\label{PV}
P^{(\varepsilon)} = P^0 + \varepsilon^2 V.
\end{equation}
In the sequel the upper index $(\varepsilon)$ is dropped.

In order to characterize the matrices $P^0$ and $V$ we divide the periodicity cell into
two sets
\begin{equation}\label{defAB}
\mathbb T^d = A \cup B; \quad A,\, B \neq \emptyset, \; A \cap B =   \emptyset,
\end{equation}
and assume that $B$ is a connected set such that its periodic extension denoted $B^{\sharp}$
is unbounded and connected. Here the connectedness is understood in terms of the transition matrix $P^0$ that is two points $x',\,x''\in\mathbb Z^d$ are connected if there exists a path $x^1,\ldots,x^L$ in
 $\mathbb Z^d$ such that $x^1=x'$, $x^L=x''$ and $p_0(x^j,x^{j+1})>0$ for all $j=1,\ldots, L-1$. We also denote by $A^\sharp$ the periodic extension of $A$. Then
$\mathbb Z^d = A^{\sharp} \cup B^{\sharp}$.

\bigskip
We impose the following conditions on $P^0$ and $V$:
\begin{itemize}
\item[\bf --] $P^0$ satisfies conditions (\ref{p})
\item[\bf --] $p_0(x,x) = 1$, if $x \in A^\sharp$;
%
\item[\bf --] $p_0(x,y) = 0$, if $x, y \in A^\sharp, \ x \neq y$;
\item[\bf --] $p_0(x,y) = 0$, if $x \in B^\sharp, \ y \in A^\sharp$;
%
%
\item[\bf --]  the elements of matrix $V$ satisfy the relation
\begin{equation}\label{v0}
\sum_{y \in \mathbb Z^d} v (x,y) = 0 \quad \forall x \in \mathbb Z^d,
\end{equation}
\end{itemize}
Notice that, as a consequence of the above conditions, $B^\sharp$ is a maximal connected component and, consequently, $P^0$ is irreducible on $B^\sharp$.
From the periodicity of $V$ it also follows that
$$
v_{max}:=\max_{x,y \in \mathbb Z^d} |v(x,y)| <\infty.
$$
Under these conditions, for the transition probabilities defined in \eqref{PV},
if $p(x,y) \neq 0$, then
\begin{itemize}
\item[\bf --] $p(x,y) \asymp 1$, when $x,y \in B^\sharp$ (rapid movement);
\item[\bf --] $p(x,y) \asymp\varepsilon^2$, when $x, y \in A^\sharp, \ x \neq y$ (slow movement);
\item[\bf --] $p(x,y) \asymp\varepsilon^2$, when $x \in B^\sharp, \ y \in A^\sharp$ (rare exchange between $A^\sharp$ and $B^\sharp$).
\end{itemize}

Let us notice that for $x,y \in B^\sharp$ we have $p_0(x,y) = \Pr (x \to y)|_{\varepsilon=0}= \lim\limits_{\varepsilon\to0}\Pr (x \to y| \ \mbox{no entry to} \ A^\sharp)$.
The above choice of the transition probabilities reflects a significant slowdown of the random walk inside of high-contrast periodic environments. In what follows we study the large time behavior of this random walk and use $\varepsilon$  as the corresponding scaling factor.

\medskip
We introduce now the rescaled process. Let $\varepsilon \mathbb Z^d = \{x: \frac{x}{\varepsilon} \in \mathbb Z^d \}$ be a compression of the lattice $\mathbb Z^d$, then $\varepsilon \mathbb Z^d = \varepsilon A^{\sharp} \cup \varepsilon B^{\sharp}$.
Let $l_0^{\infty} (\mathbb Z^d)$ be the Banach space of bounded functions on $\mathbb Z^d$ vanishing at infinity with the norm $\|f \| = \sup_{x \in \mathbb Z^d} |f(x)|$.
We denote by $T_{\varepsilon}$ the transition operator
\begin{equation}\label{Peps}
T_{\varepsilon} f(x) = \sum_{y \in \varepsilon \mathbb Z^d} p_\varepsilon (x,y) f(y), \quad f \in  l_0^{\infty} (\varepsilon \mathbb Z^d),
\end{equation}
where $p_\varepsilon (x,y) = p(\frac{x}{\varepsilon}, \frac{y}{\varepsilon})$, and $p(x,y)$ is defined above in (\ref{p}) - (\ref{PV}).
Then the operator
\begin{equation}\label{L_e}
L_\varepsilon \ = \ \frac{1}{\varepsilon^2} (T_{\varepsilon}-I)
\end{equation}
is the difference generator of the random walk $\widehat X_\varepsilon (t) =  \varepsilon \widehat X(\left[\frac{t}{\varepsilon^2}\right]) $ on $\varepsilon \mathbb Z^d$  with transition operator $T_\varepsilon$.

The goal of the paper is to describe the large time behavior of the random walk $\widehat X_\varepsilon (t)$ and to construct the limit
process.

\section{Semigroup convergence}
\label{sec2}

In this section we supplement the random walk $\widehat X_\varepsilon (t)$ with an additional component,
and, for the extended process, prove the convergence of the corresponding semigroups.
  Assume that the set $A$ defined in \eqref{defAB} contains $M \in \mathbb N$ sites of $\mathbb T^d$: $A = \{ x_1, \ldots, x_M \}$.
For each $k=1, \ldots, M$ we denote by $\{x_k \}^{\sharp}$ the periodic extension of the point $x_k \in A$, then
\begin{equation}\label{decZ}
\varepsilon \mathbb Z^d  = \varepsilon B^{\sharp} \cup \varepsilon A^{\sharp} = \varepsilon B^{\sharp} \cup \varepsilon \{ x_1 \}^{\sharp} \cup \ldots  \cup \varepsilon \{ x_M \}^{\sharp}.
\end{equation}
We assign to each $x \in \varepsilon \mathbb Z^d$ the index $k(x) \in \{0,1, \ldots, M\}$ depending on the component in decomposition (\ref{decZ}) to which $x$ belongs:
\begin{equation}\label{kx}
k(x) \ = \ \left\{
\begin{array}{l}
0, \; \mbox{ if } \;  x \in \varepsilon B^{\sharp}; \\
j, \;  \mbox{ if } \;  x \in \varepsilon \{ x_j \}^{\sharp}, \; j= 1, \ldots, M.
\end{array}
\right.
\end{equation}
With this construction in hands we  introduce the metric space
\begin{equation}\label{Ee}
E_\varepsilon\ = \  \left\{ (x, k(x)), \; x \in \varepsilon \mathbb Z^d,\; k(x) \in \{0,1,\ldots, M \} \right\}, \quad E_\varepsilon \subset \varepsilon \mathbb Z^d \times \{0,1,\ldots, M \}
 \end{equation}
 with a metric that coincides with the metric in  $\varepsilon \mathbb Z^d$ for the first component of $(x,k(x)) \in E_\varepsilon$.
We denote by ${\cal B}(E_\varepsilon)$ the space of bounded functions on $E_\varepsilon$ and introduce the transition operator $T_\varepsilon$ of the random walk $X_\varepsilon(t) = (\widehat X_\varepsilon (t), k(\widehat X_\varepsilon (t)))$ on $E_\varepsilon$ using the transition operator (\ref{Peps}) of the random walk on $\varepsilon \mathbb Z^d$:
\begin{equation}\label{Te}
(T_\varepsilon f)(x, k(x)) = \sum_{y \in \varepsilon \mathbb Z^d } p_\varepsilon (x,y) f(y ,k(y)), \quad f \in {\cal B}(E_\varepsilon).
\end{equation}
Then $T_\varepsilon$ is the contraction on  ${\cal B}(E_\varepsilon)$:
$$
\| T_\varepsilon f \|_{{\cal B}(E_\varepsilon)} = \sup_{(x, k(x))} |T_\varepsilon f (x, k(x))| \le \sup_{(x, k(x))} |f (x, k(x))|,  \quad f \in {\cal B}(E_\varepsilon).
$$
\begin{remark}\label{RT}
Since the point  $(x,k(x)) \in E_\varepsilon$ is uniquely defined by its first coordinate  $x \in \varepsilon \mathbb Z^d$, then we can use  $x \in \varepsilon \mathbb Z^d$ as a coordinate in $E_\varepsilon$ (considering $E_\varepsilon$ as a graph of the mapping $k: \varepsilon \mathbb Z^d \to \{0,1,\ldots, M \}$). In particular, for the transition probabilities of the random walk on $E_\varepsilon$ we keep the same notations $p_\varepsilon(x,y)$ as in \eqref{Peps}.
\end{remark}
\bigskip

We proceed to constructing the limit semigroup.   We denote $E= \mathbb R^d \times \{0 ,1, \ldots, M\}$, and $ C_0 (E)$ stands for the Banach space of continuous  functions vanishing at infinity. A function $F=F(x,k) \in C_0(E)$ can be represented as a vector function
$$
F(x,k) = \{ f_k (x) \in C_0(\mathbb R^d), \; k=0,1,\ldots, M \}.
$$
The norm in $C_0(E)$ is given by
$$
\|F\|_{C_0(E)} = \max_{k = 0, 1 \ldots, M}  \|f_k\|_{C_0(\mathbb R^d)}.
$$
Consider the operator
\begin{equation}\label{LM}
(L F)(x,k) = \left(
\begin{array}{c}
\Theta \cdot \nabla \nabla f_0 (x) \\
0  \\ \cdots \\ 0
\end{array}
\right) \ + \ L_A F(x,k),
\end{equation}
where  $\Theta$ is  a positive definite matrix defined below in \eqref{theta}, and  $L_A$ is a generator of a Markov jump process
\begin{equation}\label{LA}
L_A F(x,k) \ = \ \lambda(k)\ \sum_{{j=0}\atop{j \neq k}}^M \mu_{kj} (f_j(x) - f_k(x))
\end{equation}
with
\begin{equation}\label{alpha_prior}
\alpha_{0j} \ = \ \frac{1}{|B|} \ \sum_{y \in B} v(y,y_j),\quad
\alpha_{j0} = \sum_{y \in B} v(y_j, y), \quad
\alpha_{kj} = v(y_k, y_j), \ \ j,\,k=1,\ldots,M,\ j\not=k,
\end{equation}
\begin{equation}\label{mu_prior}
 \lambda(k) = \sum_{{j=0}\atop {j \neq k}}^M \alpha_{kj}, \quad \mu_{kj} = \frac{\alpha_{kj}}{\lambda(k)}.
\end{equation}
Observe that  
$$
0< \lambda_0 \le \min_k \lambda(k) \le \max_k \lambda(k) \le \lambda_1 < \infty, \quad \mu_{kj} \ge 0, \quad \sum_{{j=0}\atop{j \neq k}}^M \mu_{kj} =1 \; \; \forall \ k.
$$
The operator $L$ is defined on the core
\begin{equation}\label{core}
D \ = \ \{ (f_0, f_1, \ldots, f_M), \; f_0 \in C_0^{\infty}(\mathbb R^d), \; f_j \in C_0(\mathbb R^d), \; j=1, \ldots, M \} \ \subset \ C_0(E)
\end{equation}
which is a dense set in $ C_0 (E)$.
One can check that the operator $L$ on $C_0 (E)$ satisfies the positive maximum principle, i.e. if
$
F \in C_0(E)$  and  $\max_{ E } F(x,k) =  F(x_0, k_0) = f_{k_0}(x_0),
$
then  $L F (x_0, k_0) \le 0$. Indeed, from (\ref{LM}) - (\ref{LA}) we obtain
$$
L F(x_0, 0 ) = \Theta \cdot \nabla \nabla f_0 (x_0) + L_A F(x_0,0) \le 0  \;   \mbox{ in the case  } \; (x_0, k_0) = (x_0,0),
$$
and
$$
L F(x_0, k) = L_A F(x_0, k) \le 0 \;    \mbox{ in the case } \; (x_0, k_0) = (x_0,k), \; k \neq 0.
$$
Then by the Hille-Yosida theorem the closure of $L$ is a generator of a strongly continuous, positive, contraction semigroup $T(t)$  on $ C_0 (E)$, that is a Feller semigroup.
\medskip

For every $F \in C_0(E)$ we define on  $E_\varepsilon$ the function $\pi_\varepsilon F$ as follows:
\begin{equation}\label{1}
(\pi_\varepsilon  F) (x, k(x)) \ = \ \left\{
\begin{array}{ll}
f_0 (x), & \mbox{if} \; x \in \varepsilon B^{\sharp}, \; k(x) =0; \\
f_1 (x), & \mbox{if} \; x \in \varepsilon \{ x_1 \}^{\sharp}, \; k(x) =1;\\
\cdots \\
f_M (x), & \mbox{if} \; x \in \varepsilon \{ x_M \}^{\sharp}, \; k(x) =M.
\end{array}
\right.
\end{equation}
Let $l_0^{\infty}(E_\varepsilon)$ be a Banach space of functions on $E_\varepsilon$ vanishing as $|x| \to \infty$ with the norm
\begin{equation}\label{normle}
\|f\|_{l_0^{\infty}(E_\varepsilon)} = \sup_{(x,k(x)) \in E_\varepsilon}|f(x,k(x))| = \sup_{x \in \varepsilon \mathbb Z^d}|f(x,k(x))|.
\end{equation}
Then $\pi_\varepsilon$ defines a bounded linear transformation $ \pi_\varepsilon: C_0(E) \to l_0^{\infty}(E_\varepsilon)$:
\begin{equation}\label{hatpi1}
\| \pi_\varepsilon F \|_{l_0^{\infty}(E_\varepsilon)} = \sup_{ (x, k(x)) \in E_\varepsilon} |(\pi_\varepsilon F) (x, k(x))| \le \|  F \|_{C_0(E)}, \quad \; \sup_{\varepsilon} \|  \pi_\varepsilon \| \le 1.
\end{equation}

\bigskip


\begin{theorem}\label{T1}
Let $T(t)$ be a strongly continuous, positive, contraction semigroup on $ C_0 (E)$ with generator $L$ defined by \eqref{LM}--\eqref{mu_prior}, and $T_\varepsilon$ be the linear operator on $l_0^{\infty}(E_\varepsilon)$ defined by \eqref{Te}.

Then for every $F \in C_0(E)  $
\begin{equation}\label{M-astral}
T_\varepsilon^{\left[ \frac{t}{\varepsilon^2} \right]} \pi_\varepsilon F \ \to \  T(t) F   \quad \mbox{for all} \quad t \ge 0
\end{equation}
as $\varepsilon \to 0$.
\end{theorem}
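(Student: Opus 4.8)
The plan is to apply the Trotter–Kurtz type semigroup approximation theorem from \cite{EK}. Recall that this theorem reduces the convergence of discrete semigroups $T_\varepsilon^{[t/\varepsilon^2]}$ to the limit Feller semigroup $T(t)$ to a single statement about generators: it suffices to exhibit, for each $F$ in the core $D$, a family of functions $F_\varepsilon \in l_0^\infty(E_\varepsilon)$ such that $\|F_\varepsilon - \pi_\varepsilon F\|_{l_0^\infty(E_\varepsilon)} \to 0$ and such that $L_\varepsilon F_\varepsilon \to LF$ in the sense that $\|L_\varepsilon F_\varepsilon - \pi_\varepsilon (LF)\|_{l_0^\infty(E_\varepsilon)} \to 0$, where $L_\varepsilon = \varepsilon^{-2}(T_\varepsilon - I)$. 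Since $L$ with domain $D$ is already known (from the discussion preceding the theorem, via Hille–Yosida and the positive maximum principle) to generate a Feller semigroup, the whole burden of the proof is the construction of the corrected test functions $F_\varepsilon$ and the generator estimate.

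First I would write out $L_\varepsilon \pi_\varepsilon F$ explicitly and see why the naive choice $F_\varepsilon = \pi_\varepsilon F$ fails. On a point $x \in \varepsilon B^\sharp$ the operator $T_\varepsilon - I$ produces, at leading order in $\varepsilon$, the action of $P^0 - I$ restricted to $B^\sharp$; because $\pi_\varepsilon F$ equals the smooth function $f_0$ sampled on the lattice, this leading term does not vanish pointwise and is of order $O(1)$ rather than $O(\varepsilon^2)$, so dividing by $\varepsilon^2$ blows up. The remedy, following the corrector philosophy recalled in the introduction, is to add to $f_0$ periodic corrector terms of order $\varepsilon$ and $\varepsilon^2$: set $F_\varepsilon(x) = f_0(x) + \varepsilon\, \chi_1\!\left(\tfrac{x}{\varepsilon}\right)\cdot\nabla f_0(x) + \varepsilon^2\, \chi_2\!\left(\tfrac{x}{\varepsilon}\right) : \nabla\nabla f_0(x)$ on the $B$-component, where $\chi_1$ and $\chi_2$ are the periodic solutions of the auxiliary difference elliptic equations on the torus whose operator is $P^0 - I$ restricted to $B$. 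The first-order corrector $\chi_1$ is chosen to cancel the $O(\varepsilon)$ term, and the second-order corrector $\chi_2$ is chosen so that, after applying $\varepsilon^{-2}(T_\varepsilon - I)$ and averaging over the period, the surviving second-order term reproduces exactly $\Theta\cdot\nabla\nabla f_0$, which \emph{defines} the homogenized matrix $\Theta$ in \eqref{theta} via the standard corrector averaging formula. Solvability of these corrector problems is guaranteed by the Fredholm alternative, since $P^0 - I$ is irreducible on the connected component $B^\sharp$ and the right-hand sides will be arranged to have zero mean against the invariant measure by the condition $\sum_y v(x,y)=0$ in \eqref{v0}.

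Next I would treat the coupling between the components and the ``astral'' part. On the $A$-sites the diagonal entry of $P^0$ equals $1$, so $(T_\varepsilon - I)$ acting on $\pi_\varepsilon F$ at an $x_j$-site is already of order $\varepsilon^2$, coming entirely from the perturbation $\varepsilon^2 V$; after dividing by $\varepsilon^2$ this produces precisely the jump-generator combination $\lambda(k)\sum_j \mu_{kj}(f_j - f_k)$ prescribed in \eqref{LA}, provided the rate coefficients $\alpha_{kj}$ are read off from the matrix elements $v(y_k,y_j)$ and the $B$-to-$A$ exchange terms, which is exactly how \eqref{alpha_prior} is set up. The $B$-to-astral coupling $\alpha_{0j}$ appears through the averaged exchange $\tfrac{1}{|B|}\sum_{y\in B} v(y,y_j)$, and here one must check that the corrector terms added on $B$ do not disturb this computation at leading order, since they carry extra powers of $\varepsilon$ and the coupling is already $O(\varepsilon^2)$. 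Assembling the $B$-part and the $A$-part gives $\|L_\varepsilon F_\varepsilon - \pi_\varepsilon(LF)\|\to 0$ for $F \in D$ with $f_0 \in C_0^\infty$, which is where the smoothness in the core \eqref{core} is used to control the discrete Taylor remainders uniformly.

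The main obstacle I expect is the corrector construction and the accompanying error analysis on the $B$-component, rather than the $A$-component, which is essentially algebraic. Specifically, one has to verify that the difference-operator analogue of Taylor's formula, applied to $f_0\!\left(\tfrac{x}{\varepsilon}\right)$ together with the $\varepsilon$- and $\varepsilon^2$-scaled correctors, yields a remainder that is $o(1)$ uniformly in $x$ after division by $\varepsilon^2$; this requires the finite-range assumption \eqref{c1} (so only finitely many difference quotients appear) and uniform bounds on the correctors and their lattice-differences, which follow from ellipticity of $P^0$ on $B^\sharp$. A secondary technical point is matching the two definitions of $T_\varepsilon$ — the scalar one in \eqref{Peps} and the $E_\varepsilon$-valued one in \eqref{Te} — but Remark~\ref{RT} identifies them, so no genuine difficulty arises there. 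Once the generator convergence and the core density of $D$ are in place, the approximation theorem of \cite{EK} delivers \eqref{M-astral} directly, with convergence of $T_\varepsilon^{[t/\varepsilon^2]}\pi_\varepsilon F$ to $T(t)F$ uniform on compact time intervals for every $F \in C_0(E)$.
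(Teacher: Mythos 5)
Your overall route is the same as the paper's: reduce \eqref{M-astral} via the Ethier--Kurtz approximation theorem to a generator statement on the core $D$, build corrected test functions $F_\varepsilon$ with first- and second-order periodic correctors on the $B$-component, read the jump rates off $\varepsilon^2 V$ on the $A$-sites, and invoke the Fredholm alternative on the discrete torus for solvability. However, there is one genuine gap in your construction of $F_\varepsilon$ on $\varepsilon B^\sharp$: you include only the two diffusion correctors $\chi_1,\chi_2$ (the paper's $h$ and $g$), whereas the paper's ansatz \eqref{4} carries a third family of correctors, the term $\varepsilon^2\sum_{j=1}^M q_j(\tfrac{x}{\varepsilon})\,(f_0(x)-f_j(x))$. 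This term is not optional. Without it, the exchange contribution of $V_\varepsilon$ at a point $x\in\varepsilon B^\sharp$ is $\sum_j v(\tfrac{x}{\varepsilon},y_j)\,(f_j(x)-f_0(x))+o(1)$, in which the coefficient $v(y,y_j)$ is a genuinely non-constant periodic function of $y\in B$; it does not converge in the $l_0^\infty(E_\varepsilon)$ norm to the averaged coefficient $\alpha_{0j}=\frac{1}{|B|}\sum_{y\in B}v(y,y_j)$ that appears in the limit generator \eqref{LA}, \eqref{alpha_prior}. Your remark that ``the coupling is already $O(\varepsilon^2)$'' explains why no higher-order blow-up occurs, but after dividing by $\varepsilon^2$ this coupling is an $O(1)$ oscillating term, and an oscillating $O(1)$ term is not uniformly close to its period average. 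The paper's Proposition~\ref{P2} fixes exactly this: $q_j$ solves $(P^0-I)q_j=v(\cdot,y_j)-\alpha_{0j}\mathbf{1}$ on $B^\sharp$, so that $L_\varepsilon^0$ applied to the extra $\varepsilon^2 q_j$ term cancels the oscillation of $v(\cdot,y_j)$ and leaves the constant $\alpha_{0j}$; the Fredholm solvability of that equation is moreover what \emph{forces} $\alpha_{0j}$ to be the average over $B$, which you instead assert without a mechanism.

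Two smaller points. First, the solvability of the first corrector equation for $h$ follows from the symmetry $p_\xi(y)=p_{-\xi}(y+\xi)$ (which makes $\sum_{y\in B}\sum_\xi p_\xi(y)\,\xi=0$), not from the condition $\sum_y v(x,y)=0$ in \eqref{v0}; the latter plays no role in the diffusion correctors. Second, on the $A$-sites your computation is correct and matches the paper's \eqref{11}; no averaging is needed there because each $\{x_k\}^\sharp$ is a single periodic orbit of points, so $\alpha_{k0}=\sum_{y\in B}v(y_k,y)$ and $\alpha_{kj}=v(y_k,y_j)$ are already constants.
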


\begin{proof}
In view of (\ref{normle}) to prove (\ref{M-astral}) it suffices to show that
\begin{equation}\label{R1}
\|  T_\varepsilon^{\left[ \frac{t}{\varepsilon^2} \right]} \pi_\varepsilon F   - \pi_\varepsilon \ T(t) F  \|_{l_0^{\infty}(E_\varepsilon)} = \sup_{x \in \varepsilon \mathbb Z^d}\left| T_\varepsilon^{\left[ \frac{t}{\varepsilon^2} \right]} \pi_\varepsilon F (x, k(x))  - \pi_\varepsilon \ T(t) F (x, k(x)) \right| \to 0 \quad \mbox{as} \; \varepsilon \to 0.
\end{equation}
The proof of  (\ref{R1}) relies on the following  approximation theorem
\cite[Theorem 6.5, Ch.1]{EK}.\\[3mm]
{\bf Theorem}\,\cite{EK}.
{\sl \
For $n=1,2,\ldots$, let $T_n$ be a linear contraction on the Banach space ${\cal L}_n$, let $\varepsilon_n$ be a positive number, and put $A_n = \varepsilon_n^{-1}(T_n - E)$. Assume that $\lim_{n \to \infty} \varepsilon_n = 0$. Let $\{T(t) \}$ be a strongly continuous contraction semigroup on the Banach space ${\cal L}$ with generator $A$, and let $D$ be a core for $A$. Assume that $\pi_n: {\cal L} \to {\cal L}_n$ are bounded linear transformations with $\sup_n \|\pi_n \| < \infty$. Then the following are equivalent: \\

a) For each $f \in {\cal L} $, $T_n^{\left[ \frac{t}{\varepsilon_n} \right]} \pi_n f \ \to \  T(t) f \; $ for all  $t \ge 0$ as $\varepsilon \to 0$. \\

b) For each $f \in D$, there exists $f_n \in {\cal L}_n$ for each $n \ge 1$ such that $f_n \to f$ and $A_n f_n \to Af$.
}

\bigskip
According to this theorem  the semigroups convergence stated in item a) is equivalent to
the statement in item b) which is the subject of the  next lemma.

\begin{lemma}\label{Fn}
Let the operator $L$ be defined by \eqref{LM} -- \eqref{mu_prior} and the core $D \subset C_0(E)$ of $L$ be defined by \eqref{core}; assume that a bounded linear transformation $\pi_\varepsilon :\ C_0(E) \to l_0^{\infty}(E_\varepsilon)$ is defined by \eqref{1}, and $L_\varepsilon = \frac{1}{\varepsilon^2} ( T_\varepsilon -I)$.  Then for every $F \in D$, there exists $F_\varepsilon \in  l_0^{\infty}(E_\varepsilon)$ such that
\begin{equation}\label{F0}
 \| F_\varepsilon - \pi_\varepsilon F\|_{ l_0^{\infty}(E_\varepsilon)} \to 0
\end{equation}
and
\begin{equation}\label{F1}
\|L_\varepsilon F_\varepsilon - \pi_\varepsilon LF\|_{ l_0^{\infty}(E_\varepsilon )} \to 0 \quad \mbox{as } \; \varepsilon \to 0.
\end{equation}
\end{lemma}

\begin{proof}
For any $F = (f_0, f_1, \ldots, f_M) \in D$ we consider the following $F_\varepsilon \in l_0^{\infty}(E_\varepsilon)$
\begin{equation}\label{4}
F_\varepsilon (x, k(x)) \ = \ \left\{
\begin{array}{ll}
f_0 (x) + \varepsilon (\nabla f_0 (x), h(\frac{x}{\varepsilon})) +  \varepsilon^2 (\nabla \nabla f_0 (x), g(\frac{x}{\varepsilon}))  \\[3mm] +\varepsilon^2  \sum_{j=1}^M q_j (\frac{x}{\varepsilon}) (f_0(x) - f_j (x)),  & \mbox{if} \;  x \in \varepsilon B^{\sharp}, \; k(x) = 0, \\ \\
f_1 (x),  & \mbox{if} \;  x \in \varepsilon \{ x_1 \}^{\sharp}, \; k(x) = 1,\\ \cdots \\
f_M (x),  & \mbox{if} \;  x \in \varepsilon \{ x_M \}^{\sharp}, \; k(x) =M.
\end{array}
\right.
\end{equation}
Here $ h(y),  g(y), q_j (y), j = 1, \ldots, M,$ are periodic bounded functions  defined below. From (\ref{1}) and (\ref{4}) it immediately follows that
$$
\sup_{x \in \varepsilon \mathbb Z^d} |  F_\varepsilon (x,k(x)) - \pi_\varepsilon F(x, k(x))  | = \| F_\varepsilon - \pi_\varepsilon F\|_{ l_0^{\infty}(E_\varepsilon)} \to 0
$$ as $\varepsilon \to 0$. Thus convergence (\ref{F0}) is valid.
\medskip


In compliance with decomposition (\ref{PV}) for the transition matrix $P$ we introduce the operators:
\begin{equation}\label{PVeps}
T_\varepsilon = T_\varepsilon^0 + \varepsilon^2 V_\varepsilon,
\end{equation}
where
$$
T_\varepsilon^0 f(x,k(x))=\sum\limits_{y\in\varepsilon\mathbb Z^d}p_0\Big(\frac x\varepsilon,
\frac y\varepsilon\Big)f(y,k(y)),\quad
V_\varepsilon f(x,k(x))=\sum\limits_{y\in\varepsilon\mathbb Z^d}v\Big(\frac x\varepsilon,
\frac y\varepsilon\Big)f(y,k(y)).
$$

Let us  note that due to the structure of the matrix $P^0$, the operator $T_\varepsilon^0$ has a block structure:  $T_\varepsilon^0 f(x,k(x))=f(x,k(x))$ for $x\in \varepsilon A^\sharp$, and $T_\varepsilon^0|_{x\in \varepsilon B^{\sharp}}$ is defined by the transition probabilities of the random walk on the perforated lattice $\varepsilon B^{\sharp} = \varepsilon \mathbb Z^d \setminus \varepsilon A^{\sharp}$.

According to (\ref{PVeps}) the operator  $L_\varepsilon = \frac{1}{\varepsilon^2} ( T_\varepsilon -I)$ can be written as
$$
L_\varepsilon = \frac{1}{\varepsilon^2} ( T_\varepsilon^0 + \varepsilon^2 V_\varepsilon - I ) = L_\varepsilon^0 + V_\varepsilon,
$$
where $L_\varepsilon^0 = \frac{1}{\varepsilon^2} ( T_\varepsilon^0 - I)$ is the generator of the random walk on the perforated lattice $\varepsilon B^{\sharp} = \varepsilon \mathbb Z^d \setminus \varepsilon A^{\sharp}$.

To prove that
\begin{equation}\label{convL}
 \|L_\varepsilon F_\varepsilon - \pi_\varepsilon LF\|_{ l_0^{\infty}(E_\varepsilon)} = \sup_{x \in \varepsilon \mathbb Z^d } |L_\varepsilon F_\varepsilon (x, k(x)) - \pi_\varepsilon LF (x, k(x))|   \to 0
\end{equation}
we consider separately the case when $x \in \varepsilon B^{\sharp}$, and $x \in \varepsilon A^{\sharp}$.
 Since the second component in $E_\varepsilon$ is a function of the first one, in the remaining part of the proof for brevity  write $F_\varepsilon(x)$ instead of $F_\varepsilon(x,k(x))$.

Let $x \in \varepsilon B^{\sharp}$, then the first component of $F_\varepsilon$ can be written as a sum
\begin{equation}\label{5}
F_\varepsilon (x) = F_\varepsilon^P(x) + F_\varepsilon^Q(x), \quad  x \in \varepsilon B^{\sharp},
\end{equation}
where
\begin{equation}\label{FP}
F_\varepsilon^P(x) = f_0 (x) + \varepsilon \left(\nabla f_0 (x), h(\frac{x}{\varepsilon})\right) +  \varepsilon^2 \left(\nabla \nabla f_0 (x), g(\frac{x}{\varepsilon})\right),
\end{equation}
\begin{equation}\label{FQ}
 F_\varepsilon^Q (x) = \varepsilon^2 \sum_{j=1}^M q_j (\frac{x}{\varepsilon}) (f_0 (x) - f_j (x)).
\end{equation}
Then
\begin{equation}\label{Ldec}
L_\varepsilon F_\varepsilon = (L_\varepsilon^0 + V_\varepsilon)F_\varepsilon = L_\varepsilon^0 (F_\varepsilon^P  +  F_\varepsilon^Q) + V_\varepsilon F_\varepsilon =  L_\varepsilon^0 F_\varepsilon^P  + L_\varepsilon^0 F_\varepsilon^Q  + V_\varepsilon F_\varepsilon.
\end{equation}

\begin{proposition}\label{P1}
There exist  bounded periodic functions $h(y)=\{h_i(y)\}_{i=1}^d$ and $g(y)=\{g_{im}(y)\}_{i,m=1}^d$ (correctors) and a positive definite matrix $\Theta>0$, such that
\begin{equation}\label{PP1}
L_\varepsilon^0 F_\varepsilon^P  \ \to \Theta \cdot \nabla \nabla f_0, \quad \; \mbox{i.e. } \ \sup_{x \in \varepsilon B^{\sharp}}|L_\varepsilon^0 F_\varepsilon^P (x) -  \Theta \cdot \nabla \nabla f_0(x)| \to 0  \quad \mbox{as } \; \varepsilon \to 0,
\end{equation}
where $F_\varepsilon^P$ is defined in \eqref{FP}.
\end{proposition}

The proof of this proposition is based on the corrector techniques, it is given in the Appendix.
\bigskip

Using (\ref{5}) - (\ref{FQ}) and the continuity of the functions $f_j$ we have
\begin{equation}\label{8}
(L_\varepsilon^0 F_\varepsilon^Q + V_\varepsilon F_\varepsilon)(x) = \sum_{j=1}^M\Big((T^0_\varepsilon - I) q_j (\frac{x}{\varepsilon})\Big) (f_0(x) - f_j(x)) +  \sum_{j=1}^M v_\varepsilon(x, x_j) (f_j(x) - f_0(x)) + o(1),
\end{equation}
where $v_\varepsilon(x,x_j)=v\big(\frac x\varepsilon,\frac {x_j}\varepsilon \big)$, and  $o(1)$ tends to 0 as $\varepsilon\to0$.

\begin{proposition}\label{P2}
There exist bounded periodic functions $q_j(y)$  in the decomposition \eqref{8} and positive constants $\alpha_{0j}>0, \; j = 1, \ldots, M$, such that
\begin{equation}\label{7}
\sup_{x \in \varepsilon B^{\sharp}} \left| (L_\varepsilon^0 F_\varepsilon^Q + V_\varepsilon F_\varepsilon)(x) - \sum_{j=1}^M \alpha_{0j} (f_j(x) - f_0(x))  \right| \ \to \ 0  \quad \mbox{as } \; \varepsilon \to 0.
\end{equation}
\end{proposition}
\noindent
The proof of the proposition is given in the Appendix.
\bigskip

Combining (\ref{PP1}), (\ref{8}) and (\ref{7}) yields
\begin{equation}\label{B}
\sup_{x \in \varepsilon B^{\sharp} } |L_\varepsilon F_\varepsilon (x) - \pi_\varepsilon LF (x)|   \to 0 \quad \varepsilon \to 0.
\end{equation}

\bigskip

The next step is to prove that
\begin{equation}\label{10}
\sup_{x \in \varepsilon A^{\sharp} } |L_\varepsilon F_\varepsilon (x) - \pi_\varepsilon L F (x)|   \to 0 \quad \varepsilon \to 0.
\end{equation}
Let $x \in \varepsilon \{ x_k \}^{\sharp} \subset \varepsilon A^{\sharp}$. From (\ref{4}) and continuity of functions $f_k$ it follows that
$$
(L_\varepsilon F_\varepsilon)(x)= (L_\varepsilon^0 + V_\varepsilon) F_\varepsilon (x) = V_\varepsilon F_\varepsilon (x)
$$
\begin{equation}\label{11}
=\sum_{{j=1}\atop{j \neq k}}^M v(y_k, y_j) (f_j(x) - f_k(x)) +  \sum_{y \in B} v(y_k, y) (f_0(x) - f_k(x))+o(1) \quad \mbox{as } \varepsilon \to 0,
\end{equation}
where we have used the fact that $f_k (x') = f_k(x) + o(1)$ when $|x-x'| \to 0$.
Here $y \in \mathbb T^d$ are variables on the periodicity cell, $y_j\in\mathbb Z^d$, and $v(y_k, y_j)$ are the elements of the matrix $V$ given by (\ref{PV}). Thus if for every $j,k = 1, \ldots, M,\; j \neq k$, we set:
$$
\alpha_{kj} = v(y_k, y_j), \quad \alpha_{k0} = \sum_{y \in B} v(y_k, y), \quad  \lambda(k) = \sum_{{j=0}\atop {j \neq k}}^M \alpha_{kj}, \quad \mu_{kj} = \frac{\alpha_{kj}}{\lambda(k)},
$$
then relation (\ref{11}) implies (\ref{10}).

Finally, (\ref{convL}) is a consequence of (\ref{B}) and (\ref{10}), and Lemma \ref{Fn} is proved.
\end{proof}

It remains to recall that (\ref{R1}) is a straightforward consequence of the above approximation theorem. This completes the proof of Theorem \ref{T1}.

\end{proof}

\begin{remark}\label{R2}
In the next section we show that there exists a Markov process $\mathcal X(t)$ corresponding to the Feller semigroup $T(t)$. From Theorem \ref{T1} one can easily derive the convergence of finite dimensional distributions of the processes $X_\varepsilon(t)$ (random walks on $E_\varepsilon$ defined by \eqref{Te}) to those of $\mathcal X(t)$.
\end{remark}

\section{Invariance principle, convergence of the processes}

For the original process  $X_\varepsilon (t) = (\widehat X_\varepsilon (t),  k(\widehat X_\varepsilon (t)) ) $, which is the random walk on $E_\varepsilon$ (see (\ref{Te})), the second component $ k(\widehat X_\varepsilon (t)) \in \{0 ,1,\ldots, M \}$ is the function of the first component  $\widehat X_\varepsilon (t) \in \varepsilon \mathbb Z^d$ (see (\ref{Peps})). Thus Markov processes $\widehat X_\varepsilon (t)$ and $X_\varepsilon (t)$ are equivalent, i.e. the trajectories of $\{ \widehat X_\varepsilon(t) \}$ are isomorphic to trajectories of  $\{ X_\varepsilon(t) \}$. However, the second component of  $X_\varepsilon(t)$ plays the crucial role when passing to the limit $\varepsilon \to 0$. As has been shown in Section 2 the limit process $\mathcal X(t)$ preserves the Markov property only in the presence of the second component $k(t) \in  \{0,1, \ldots, M \}$, and this is an interesting asymptotic property of the processes  $ X_\varepsilon(t)$. It should be noted that in the process $\mathcal X(t)$ the second component is not a function of the first one anymore. This can be observed, in particular, from the structure of the limit generator, see (\ref{LM}) - (\ref{LA}).

In the previous section we justified the convergence of the semigroups, and consequently, the finite dimensional distributions of  $ X_\varepsilon(t)$. The goal of this section is to prove the existence of the limit process $\mathcal X(t)$ in $E$ with sample paths in $D_E [0, \infty)$ and to establish the invariance principle for the processes $X_\varepsilon (t)$. Namely, we show that $X_\varepsilon (t)$ converges in distributions to $\mathcal X(t)$ as $\varepsilon \to 0$ in the Skorokhod topology of $D_E [0, \infty)$.

\begin{theorem}\label{T2}
For any initial distribution $\nu \in {\cal P}(E)$ there exists a Markov process $\mathcal X(t)$ corresponding to the semigroup $T(t): C_0(E) \to C_0(E)$ with generator $L$ defined by \eqref{LM} -- \eqref{mu_prior} and with sample paths in $D_E [0, \infty)$.

If $\nu$ is the limit law of $X_\varepsilon(0)$, then 
\begin{equation}\label{TT2}
X_\varepsilon (t) \ \Rightarrow \ \mathcal X(t) \quad \mbox{ in } \; D_E [0, \infty) \; \mbox{ as } \; \varepsilon \to 0.
\end{equation}
\end{theorem}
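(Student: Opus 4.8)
The plan is to prove the two assertions in turn. The existence of $\mathcal X(t)$ follows from abstract Feller theory: the operator $L$ of \eqref{LM}--\eqref{mu_prior} generates a Feller semigroup $T(t)$ on $C_0(E)$, as was verified before Theorem~\ref{T1} through the positive maximum principle and Hille--Yosida. Since $E=\mathbb R^d\times\{0,1,\ldots,M\}$ is locally compact and separable, the standard correspondence between Feller semigroups and Feller processes (see \cite[Ch.~4]{EK}) produces, for each initial law $\nu\in\mathcal P(E)$, a strong Markov process $\mathcal X(t)$ with initial distribution $\nu$ and sample paths in $D_E[0,\infty)$; in particular $\mathcal X(t)$ is stochastically continuous.

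For the invariance principle \eqref{TT2}, recall that Theorem~\ref{T1} together with Remark~\ref{R2} already yields convergence of the finite-dimensional distributions of $X_\varepsilon(t)$ to those of $\mathcal X(t)$. Since $\mathcal X$ is stochastically continuous, every fixed time is a.s.\ a continuity point, so the limiting law on $D_E[0,\infty)$ is uniquely determined by these distributions. By Prokhorov's theorem it therefore suffices to establish that the family $\{X_\varepsilon\}$ is relatively compact in $D_E[0,\infty)$. I would verify this through the two criteria of \cite[Ch.~3]{EK}: a compact containment condition, and relative compactness of $\{G(X_\varepsilon)\}$ in $D_{\mathbb R}[0,\infty)$ for $G$ ranging over the core $D$ of \eqref{core}, which is dense in $C_0(E)$.

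Compact containment splits along the two components. The second component $k(\widehat X_\varepsilon(t))$ takes values in the finite set $\{0,1,\ldots,M\}$ and is automatically contained. For the unbounded first component $\widehat X_\varepsilon(t)\in\varepsilon\mathbb Z^d$ I would derive a uniform second-moment bound by applying $L_\varepsilon$ to a smooth truncation of $x\mapsto|x|^2$: the finite range \eqref{c1} and the block structure of $T^0_\varepsilon$ give $\sup_\varepsilon\mathbb E\,|\widehat X_\varepsilon(t)-\widehat X_\varepsilon(0)|^2\le C(1+t)$, and a maximal inequality then furnishes, for every $T,\eta>0$, a compact $K\subset E$ with $\inf_\varepsilon\Pr\big(X_\varepsilon(s)\in K\ \text{for all}\ s\le T\big)\ge 1-\eta$. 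For the second criterion I would use the discrete Dynkin decomposition: for $F\in D$ and the approximants $F_\varepsilon$ of Lemma~\ref{Fn}, the quantity $F_\varepsilon(X_\varepsilon(t))-\varepsilon^2\sum_{m<[t/\varepsilon^2]}(L_\varepsilon F_\varepsilon)(X_\varepsilon(m\varepsilon^2))$ is a martingale; Lemma~\ref{Fn} shows its compensator is uniformly bounded, and estimating the predictable bracket, again via \eqref{c1}, shows that its increments over short intervals are small. The Aldous--Rebolledo criterion then yields relative compactness of $\{F(X_\varepsilon)\}$ in $D_{\mathbb R}[0,\infty)$ for each $F\in D$, completing the verification.

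I expect the genuine difficulty to lie in this last oscillation estimate for the first component on the intervals where $k\neq0$. In the limit $\widehat{\mathcal X}(t)$ is frozen precisely on the random \emph{astral} excursions where $k(t)\neq0$, and one must rule out that the prelimit trajectory accumulates spurious small fluctuations there; equivalently, one must secure convergence in the Skorokhod $J_1$ topology rather than merely of finite-dimensional distributions. This is exactly what the Aldous criterion controls, but checking it relies on the uniform compensator bound from Lemma~\ref{Fn} and on the moment estimate above, so the care invested earlier in constructing the correctors $h$, $g$, $q_j$ is what ultimately makes the path-space statement work.
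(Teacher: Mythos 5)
Your overall strategy is sound, but it is genuinely different from --- and considerably more laborious than --- the route the paper takes. The paper does not verify tightness by hand at all: it quotes Theorem~2.12 of \cite[Ch.~4]{EK}, whose hypotheses are precisely (i) $E$ locally compact and separable, (ii) a Feller semigroup $T(t)$ on $C_0(E)$, (iii) the discrete semigroup convergence $T_\varepsilon^{[t/\varepsilon^2]}\pi_\varepsilon F\to T(t)F$ established in Theorem~\ref{T1}, and (iv) convergence of the initial laws; the conclusion of that theorem already contains both the existence of the limit Feller process with paths in $D_E[0,\infty)$ and the weak convergence $X_\varepsilon\Rightarrow\mathcal X$ in the Skorokhod topology. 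In other words, once Theorem~\ref{T1} is in place the invariance principle is a direct citation, and the tightness you propose to check via compact containment and the Aldous--Rebolledo criterion is delivered internally by the Ethier--Kurtz machinery. Your decomposition into ``finite-dimensional distributions plus relative compactness'' is the classical alternative and would also work; it buys a more self-contained and transparent argument (one sees explicitly why the path does not oscillate during the astral excursions), at the price of redoing estimates that the cited theorem already encapsulates.

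One step of your sketch is too optimistic as written: the second-moment bound $\sup_\varepsilon\mathbb E\,|\widehat X_\varepsilon(t)-\widehat X_\varepsilon(0)|^2\le C(1+t)$ does not follow from applying $L_\varepsilon$ to a truncation of $|x|^2$ together with \eqref{c1} alone. The symmetry $p(x,y)=p(y,x)$ makes the local drift $\sum_\xi p_0(z,z+\xi)\,\xi$ vanish only on average over the period, not pointwise on $B^\sharp$, so the cross term $2x\cdot\sum_y p_\varepsilon(x,y)(y-x)/\varepsilon^2$ is a priori of order $|x|/\varepsilon$. To get a uniform bound you must work with the corrector-modified coordinate $\widehat X_\varepsilon(t)+\varepsilon h\bigl(\widehat X_\varepsilon(t)/\varepsilon\bigr)$ (i.e.\ use $F_\varepsilon^P$ from \eqref{FP} rather than the bare quadratic), exactly as in Proposition~\ref{P1}. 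This is fixable with the tools you already have, but it should be stated; as it stands the compact containment step has a gap that the paper's one-line appeal to \cite{EK} simply never has to confront.
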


\begin{proof}
The main idea of the proof is to combine the convergence of the finite dimensional distributions of $X_\varepsilon(t)$ (that is a consequence of Theorem \ref{T1} see Remark \ref{R2}) and the tightness of $X_\varepsilon(t)$ in $D_E [0, \infty)$.

We apply here Theorem 2.12 from \cite{EK}, Chapter 4. For the reader convenience we formulate it here.

\bigskip\noindent
{\bf Theorem} \cite{EK}.{\sl \ 
Let $E, E_1, E_2, \ldots $ be metric spaces with $E$ locally compact and separable. For $n=1,2, \ldots$ let $\eta_n:E_n \to E$ be measurable, let $\mu_n(x, \Gamma)$ be a transition function on $E_n \times {\cal B} (E_n)$, and suppose $\{ Y_n(k), \ k=0,1,2, \ldots \}$ is a Markov chain in $E_n$ corresponding to $\mu_n (x, \Gamma)$. Let $\epsilon_n >0$ satisfy $\lim_{n \to \infty} \epsilon_n = 0$. Define
$X_n(t) = \eta_n ( Y_n ([t/\epsilon_n]))$,
$$
T_n f(x) = \int f(y) \mu_n (x , dy), \quad f \in B(E_n),
$$
and $\pi_n: B(E) \to  B (E_n)$ by $\pi_n f = f\circ \eta_n$. Suppose that $\{T(t)\}$ is a Feller semigroup on $C_0 (E)$ and that for each $f \in C_0(E)$ and $t \ge 0$
\begin{equation}\label{EK1}
\lim_{n \to \infty} T_n^{[t/\epsilon_n]} f \ = \ T(t) f.
\end{equation}
If $\{ X_n (0) \}$ has limiting distribution $\nu \in {\cal P} (E)$, then there is a Markov process $X$ corresponding to $\{ T(t) \}$ with initial distribution $\nu$ and sample paths in $D_E [0, \infty)$, and $X_n \Rightarrow X$.
}

\bigskip
In our case, $E=\mathbb R^d \times \{0,1, \ldots, M\}$, $E_n=E_\varepsilon \subset E, \; \varepsilon = \frac1n$, and $\eta_n = \eta_\varepsilon: \ E_\varepsilon \to E$ is the measurable mapping for every $\varepsilon$, it is embedding of the set $E_\varepsilon$, isomorphic to the lattice $\varepsilon \mathbb Z^d$, to $E$.
The Markov chain $Y_n(m), m = 0,1, \ldots$ is the same as the random walk $X_\varepsilon (m) = (\widehat X_\varepsilon (m),  k(\widehat X_\varepsilon (m)) )$ on $E_\varepsilon$ (see (\ref{Te})) with the transition function $\mu_n ((x, k(x)), \ (y, k(y))) = p_\varepsilon (x,y)$.
The semigroup $T(t)$ on $C_0(E)$ generated by the operator $L$, see \eqref{LM} -- \eqref{mu_prior}, is the Feller semigroup by the Hille-Yosida theorem as was mentioned in the beginning of Section 2.
Setting $\epsilon_n  = \frac{1}{n^2}$ in (\ref{EK1}) we see that the convergence in (\ref{M-astral}) ensures the convergence in (\ref{EK1}).

Thus, all assumptions of Theorem 2.12 from \cite{EK} are fulfilled. Consequently, if we set $X_\varepsilon (t)= Y_n (\left[ \frac{t}{\varepsilon^2} \right])=\eta_n\big(Y_n (\left[ \frac{t}{\varepsilon^2} \right])\big)$, then  these processes convergence in law in the space $D_E[0, \infty)$. Theorem \ref{T2} is completely proved.
\end{proof}

\section{Generalization. Several fast components} 

In the final part of the paper we consider some generalizations of the  model studied above.
We keep all the assumptions on $p(x,y)$, in particular we assume that these transition probabilities are periodic, have a finite range of interaction and define an irreducible random walk, and that \eqref{PV} holds. We also keep
all the assumptions on $p_0(x,y)$ except for that on the structure
of the set $B^{\sharp}$. Here we assume that $B^{\sharp}$ is the union of $N$, $N> 1$, non-intersecting unbounded
periodic sets such that $P^0$ is invariant and irreducible on each of these sets.

We denote these sets $B^{\sharp}_1,\ldots, B^{\sharp}_N$ and assume that each $B^{\sharp}_j$, $j=1,\ldots, N$, is connected with respect to $P^0$ and, moreover, is a maximal connected component.
Our assumptions on the matrix $P^0$ now take the form
\begin{itemize}
\item[\bf --] $P^0$ satisfies conditions (\ref{p})
\item[\bf --] $p_0(x,x) = 1$, if $x \in A^\sharp$;
%
\item[\bf --] $p_0(x,y) = 0$, if $x, y \in A^\sharp, \ x \neq y$;
\item[\bf --] $p_0(x,y) = 0$, if $x \in A^\sharp, \ y \in B^\sharp$;
\item[\bf --] $p_0(x,y) = 0$, if $x \in B_i^\sharp, \ y \in B_j^\sharp$,\ $i,\,j=1,\ldots, N$,\ $i\not=j$.
%
%
\end{itemize}


As in Section \ref{sec2} we introduce the extended process on $\varepsilon \mathbb Z^d, \; \varepsilon \in (0,1)$. 
For each $k=1, \ldots, M$ we denote by $\{x_k \}^{\sharp}$ the periodic extension of the point $x_k \in A$, then
\begin{equation}\label{decZ_bis}
\varepsilon \mathbb Z^d  = \varepsilon B^{\sharp} \cup \varepsilon A^{\sharp} = \varepsilon B_1^{\sharp}\cup\ldots\cup \varepsilon B_N^\sharp \cup \varepsilon \{ x_1 \}^{\sharp} \cup \ldots  \cup \varepsilon \{ x_M \}^{\sharp}.
\end{equation}
We assign to each $x \in \varepsilon \mathbb Z^d$ the index $k(x) \in \{1, \ldots, N+M\}$ depending on the component in decomposition (\ref{decZ_bis}) to which $x$ belongs:
\begin{equation}\label{kx_bis}
k(x) \ = \ \left\{
\begin{array}{l}
j, \; \mbox{ if } \;  x \in \varepsilon B_j^{\sharp},\ j=1,\ldots,N; \\
N+j, \;  \mbox{ if } \;  x \in \varepsilon \{ x_j \}^{\sharp}, \; j= 1, \ldots, M.
\end{array}
\right.
\end{equation}
With this construction in hands we  introduce the metric space
\begin{equation}\label{Ee_bis}
E_\varepsilon\ = \  \left\{ (x, k(x)), \; x \in \varepsilon \mathbb Z^d,\; k(x) \in \{1,\ldots, N+M \} \right\}, \quad E_\varepsilon \subset \varepsilon \mathbb Z^d \times \{1,\ldots, N+M \}
 \end{equation}
 with a metric that coincides with the metric in  $\varepsilon \mathbb Z^d$ for the first component of $(x,k(x)) \in E_\varepsilon$.
As in Section \ref{sec2} we denote by ${\cal B}(E_\varepsilon)$ the space of bounded functions on $E_\varepsilon$ and introduce the transition operator $T_\varepsilon$ of random walk $X_\varepsilon(t) = (\widehat X_\varepsilon (t), k(\widehat X_\varepsilon (t)))$ on $E_\varepsilon$ as follows:
\begin{equation}\label{Te_bis}
(T_\varepsilon f)(x, k(x)) = \sum_{y \in \varepsilon \mathbb Z^d } p_\varepsilon (x,y) f(y ,k(y)), \quad f \in {\cal B}(E_\varepsilon).
\end{equation}
Then $T_\varepsilon$ is the contraction on  ${\cal B}(E_\varepsilon)$.
\bigskip

To construct the limit semigroup we denote $E= \mathbb R^d \times \{1, \ldots, N+M\}$, and $ C_0 (E)$ stands for the Banach space of continuous  functions vanishing at infinity. A function $F=F(x,k) \in C_0(E)$ can be represented as a vector function
$$
F(x,k) = \{ f_k (x) \in C_0(\mathbb R^d), \; k=1,\ldots, N+M \}.
$$
We introduce the operator
\begin{equation}\label{LM_bis}
(L F)(x,k) = \left(
\begin{array}{c}
\Theta^1 \cdot \nabla \nabla f_1 (x) \\
\cdots\\
\Theta^N \cdot \nabla \nabla f_N (x) \\
0  \\ \cdots \\ 0
\end{array}
\right) \ + \ L_A F(x,k),
\end{equation}
where  $\Theta^1,\ldots,\Theta^N$  are  positive definite matrices defined below in formula \eqref{defth_bis}, and  $L_A$ is a generator of the following Markov jump process
\begin{equation}\label{LA_bis}
L_A F(x,k) \ = \ \lambda(k)\ \sum_{{j=1}\atop{j \neq k}}^{N+M} \mu_{kj} (f_j(x) - f_k(x)).
\end{equation}
Here the parameters $\lambda(k)$ and $\mu_{kj}$ are determined as follows: first we define
transition intensities 
$$
\alpha_{kj}=\left\{
\begin{array}{ll}
\displaystyle
\frac1{|B_k|}\sum_{y\in B_k}\sum_{y'\in B_j}v(y,y'),\ \ &\hbox{if }k,j=1,\ldots, N,\ k\not=j; \\[3mm]
\displaystyle
\frac1{|B_k|}\sum_{y\in B_k}v(y,x_{j-N}),\ \ &\hbox{if }k=1,\ldots, N,\ j=N+1,\ldots,N+M; \\[3mm]
\displaystyle
\sum_{y\in B_j}v(x_{k-N},y),\ \ &\hbox{if }k=N+1,\ldots,N+M,\ j=1,\ldots, N; \\[4mm]
\displaystyle
v(x_{k-N},x_{j-N}),\ \ &\hbox{if }k,\,j=N+1,\ldots,N+M, \ k\not=j.
\end{array}
\right.
$$
and then set for each $k,\,j=1,\ldots, N+M$
\begin{equation}\label{deflamu_bis}
\lambda(k)=\sum\limits_{i=1}^{N+M}\alpha_{ki},\qquad \mu_{kj}=\frac{\alpha_{kj}}{\lambda(k)}. 
\end{equation}

The operator $L$ is defined on the core
\begin{equation}\label{core_bis}
D =  \{ (f_1, \ldots, f_{N+M}), \; f_j \in C_0^{\infty}(\mathbb R^d)\ \hbox{for }j=1,\ldots, N; \; f_j \in C_0(\mathbb R^d), \ \hbox{for } j=N+1, \ldots, N+M \},
\end{equation}
which is a dense set in $ C_0 (E)$.
As in Section \ref{sec2} one can check that the operator $L$ on $C_0 (E)$ satisfies the positive maximum principle.
Then by the Hille-Yosida theorem the closure of $L$ is a generator of a strongly continuous, positive, contraction semigroup $T(t)$  on $ C_0 (E)$.

\medskip
In this framework the operator  $\pi_\varepsilon:C_0(E)\mapsto l_0^{\infty}(E_\varepsilon)$ is defined as follows:
\begin{equation}\label{1_bis}
(\pi_\varepsilon  F) (x, k(x)) \ = \ \left\{
\begin{array}{ll}
f_1 (x), & \mbox{if} \; x \in \varepsilon B_1^{\sharp}, \; k(x) =1; \\
\cdots\\
f_N (x), & \mbox{if} \; x \in \varepsilon B_N^{\sharp}, \; k(x) =N; \\[2mm]
f_{N+1} (x), & \mbox{if} \; x \in \varepsilon \{ x_1 \}^{\sharp}, \; k(x) =N+1;\\
\cdots \\
f_{N+M} (x), & \mbox{if} \; x \in \varepsilon \{ x_M \}^{\sharp}, \; k(x) =N+M.
\end{array}
\right.
\end{equation}
It remains to define matrices $\Theta^j$ that appeared in \eqref{LM_bis}. In fact, for each $j=1,\ldots,N$, the matrix $\Theta^j$ coincides with  the effective diffusion matrix of the random walk on $B_j^\sharp$ with transition matrix $P^0$. We denote the restriction of $P^0$ on $B_j$ by $P^0_j$ and recall of the definition of the effective diffusion matrix.  To this end we consider, for each $j=1,\ldots, N$,  the equation
$$
 \sum_{\xi\in\Lambda_y} p_\xi (y) \left(  \xi + ( h^j(y+\xi) - h^j(y))\right)=0,\qquad y\in B_j^\sharp.
$$
As was shown in the proof of Proposition \ref{P1} in Appendix, this equation has a periodic solution which is unique
up to an additive constant. We set
\begin{equation}\label{defth_bis}
\Theta^j = \frac{1}{|B_j|} \sum_{y \in B_j} \sum_{\xi \in \Lambda_y} p_\xi (y)\, \xi \otimes \left( \frac12  \xi  +  h^j(y + \xi) \right).
\end{equation}

\begin{theorem}\label{T3_bis}
Let $T(t)$ be a strongly continuous, positive, contraction semigroup on $ C_0 (E)$ with generator $L$ defined by \eqref{LM_bis} -- \eqref{deflamu_bis}, \eqref{defth_bis}, and $T_\varepsilon$ be the linear operator on $l_0^{\infty}(E_\varepsilon)$ defined by \eqref{Te_bis}.

Then for every $F \in C_0(E)  $
\begin{equation*}\label{M-astral_bis}
T_\varepsilon^{\left[ \frac{t}{\varepsilon^2} \right]} \pi_\varepsilon F \ \to \  T(t) F   \quad \mbox{for all} \quad t \ge 0
\end{equation*}
as $\varepsilon \to 0$.

For any initial distribution $\nu \in {\cal P}(E)$ there exists a Markov process $\mathcal{X}(t)$ corresponding to the semigroup $T(t): C_0(E) \to C_0(E)$ with generator $L$ 
and sample paths in $D_E [0, \infty)$. Moreover, if the initial distributions $\nu_\eps\in {\cal P}(E_\varepsilon)$ of the processes $X_\varepsilon$ converge weakly, as $\varepsilon\to0$, to $\nu \in {\cal P}(E)$, then
\begin{equation*}\label{TT2_bis}
X_\varepsilon (t) \ \Rightarrow \ \mathcal{X}(t) \quad \mbox{ in } \; D_E [0, \infty) \; \mbox{ as } \; \varepsilon \to 0.
\end{equation*}
\end{theorem}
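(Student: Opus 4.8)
The plan is to reproduce, in this multi-component setting, the two-step argument used for Theorems \ref{T1} and \ref{T2}. First I would prove the semigroup convergence $T_\varepsilon^{[t/\varepsilon^2]}\pi_\varepsilon F\to T(t)F$ through the approximation theorem \cite[Theorem 6.5, Ch.1]{EK}. Exactly as for Lemma \ref{Fn}, this reduces the statement to the following: for every $F=(f_1,\ldots,f_{N+M})\in D$ one must exhibit $F_\varepsilon\in l_0^{\infty}(E_\varepsilon)$ with $\|F_\varepsilon-\pi_\varepsilon F\|\to0$ and $\|L_\varepsilon F_\varepsilon-\pi_\varepsilon LF\|\to0$ as $\varepsilon\to0$, where again $L_\varepsilon=L_\varepsilon^0+V_\varepsilon$ with $L_\varepsilon^0=\varepsilon^{-2}(T_\varepsilon^0-I)$.

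The decisive structural feature is that, by the assumptions on $P^0$, one has $p_0(x,y)=0$ whenever $x$ and $y$ lie in distinct pieces of the decomposition (\ref{decZ_bis}); hence $T_\varepsilon^0$, and therefore $L_\varepsilon^0$, acts independently on each $\varepsilon B_j^{\sharp}$ and leaves every astral fibre $\varepsilon\{x_k\}^{\sharp}$ fixed. I would therefore define $F_\varepsilon$ by the corrector ansatz (\ref{4}), now localized to each fast component: for $x\in\varepsilon B_j^{\sharp}$, $j=1,\ldots,N$,
\[
F_\varepsilon(x)=f_j(x)+\varepsilon\bigl(\nabla f_j(x),h^j(\tfrac x\varepsilon)\bigr)+\varepsilon^2\bigl(\nabla\nabla f_j(x),g^j(\tfrac x\varepsilon)\bigr)+\varepsilon^2\sum_{i\neq j}q^j_i(\tfrac x\varepsilon)\bigl(f_j(x)-f_i(x)\bigr),
\]
with $h^j,g^j$ the first- and second-order diffusion correctors attached to the restriction $P^0_j$, and $F_\varepsilon(x)=f_{N+k}(x)$ for $x\in\varepsilon\{x_k\}^{\sharp}$. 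Applying Proposition \ref{P1} separately on each maximal irreducible component $B_j^{\sharp}$ gives solvability of the first-order corrector equation and the limit $L_\varepsilon^0 F_\varepsilon^P\to\Theta^j\cdot\nabla\nabla f_j$ with $\Theta^j$ exactly as in (\ref{defth_bis}).

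Next I would run the analogue of Proposition \ref{P2} on each $B_j^{\sharp}$. The periodic correctors $q^j_i$ solve $(P^0_j-I)q^j_i=r^j_i-\alpha_{ji}$ on $B_j$, where $r^j_i(y)$ is the total $V$-rate of leaving $y\in B_j$ towards the piece labelled $i$, namely $\sum_{y'\in B_i}v(y,y')$ if $i\le N$ and $v(y,x_{i-N})$ if $i>N$. Since $P^0$ is symmetric, the reversible measure of the recurrent walk on $B_j$ is uniform, so the Fredholm solvability condition forces $\alpha_{ji}=|B_j|^{-1}\sum_{y\in B_j}r^j_i(y)$, which is precisely the prefactor $|B_j|^{-1}$ appearing in the first two cases of the definition of $\alpha_{kj}$. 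Combining $L_\varepsilon^0F_\varepsilon^Q$ with $V_\varepsilon F_\varepsilon$ then yields, uniformly on $\varepsilon B_j^{\sharp}$, the exchange term $\sum_{i\neq j}\alpha_{ji}(f_i-f_j)$, i.e. the restriction of $L_AF$ to state $j$. On the astral fibres $L_\varepsilon^0 F_\varepsilon=0$, and, as in formula (\ref{11}), the continuity of the $f_k$ reduces $V_\varepsilon F_\varepsilon$ to $\sum_{j\neq N+k}\alpha_{N+k,j}(f_j-f_{N+k})+o(1)$, matching the remaining two cases of the $\alpha$-matrix (here no averaging occurs, since $x_k$ is a single site). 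Collecting these estimates over all $N+M$ pieces gives $\|L_\varepsilon F_\varepsilon-\pi_\varepsilon LF\|_{l_0^{\infty}(E_\varepsilon)}\to0$, and the approximation theorem delivers the semigroup convergence.

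For the second assertion I would argue verbatim as in the proof of Theorem \ref{T2}: the operator $L$ of (\ref{LM_bis})--(\ref{deflamu_bis}) satisfies the positive maximum principle, so by Hille--Yosida its closure generates a Feller semigroup $T(t)$ on $C_0(E)$; the convergence just proved supplies hypothesis (\ref{EK1}) of \cite[Theorem 2.12, Ch.4]{EK}, and the assumed weak convergence $\nu_\varepsilon\Rightarrow\nu$ of the initial laws then produces both the limit Markov process $\mathcal{X}(t)$ with paths in $D_E[0,\infty)$ and the invariance principle $X_\varepsilon\Rightarrow\mathcal{X}$. The only genuinely new difficulty compared with the single-component case lies in the first step, namely checking that the independently constructed correctors on $B_1^{\sharp},\ldots,B_N^{\sharp}$ combine with the perturbation $V$ so as to reproduce exactly the four-case intensity matrix $(\alpha_{kj})$, in particular the fast-to-fast intensities that are absent when $N=1$. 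Once Propositions \ref{P1} and \ref{P2} are seen to localize cleanly to each maximal component, the remaining work is bookkeeping.
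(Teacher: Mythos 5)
Your proposal is correct and follows exactly the route the paper intends: the paper's own ``proof'' of Theorem~\ref{T3_bis} consists of the single remark that it follows the line of Theorems~\ref{T1} and~\ref{T2}, and your component-by-component corrector construction, the Fredholm identification of the four-case intensity matrix $(\alpha_{kj})$, and the final appeal to the Ethier--Kurtz approximation and invariance theorems are precisely the omitted details.
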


\begin{proof}
The proof of this Theorem follows the line of the proof of Theorems \ref{T1} and \ref{T2}. We leave it to the reader.
\end{proof}

The limit process $\mathcal X(t)$ can be described in the following way. Its second component is a Markov jump process with $N+M$ states whose intensities  and transition probabilities are
given in \eqref{deflamu_bis}. The first component that evolves in $\mathbb R^d$ remains still when  the second one takes on values in
$\{N+1, N+M\}$, and it shows a diffusive behaviour with the covariance $\Theta^j$ when the  second
component is equal to $j$, $j=1,\ldots, N$.

\section*{Appendix: proofs of the propositions}

\begin{proof}[Proof of Proposition \ref{P2}]

From (\ref{8}) we obtain the following system of uncoupled equations on the functions $q_j (\frac{x}{\varepsilon})$ and constants $\alpha_{0 j}$:
$$
 \Big((T^0_\varepsilon - I) q_j (\frac{x}{\varepsilon})\Big) (f_0(x) - f_j(x)) +   v(\frac{x}{\varepsilon}, \frac{x_j}{\varepsilon}) (f_j(x) - f_0(x)) = \alpha_{0j}  (f_j(x) - f_0(x)), \quad j = 1, \ldots, M.
$$
Then $q_j (\frac{x}{\varepsilon})$ satisfies, for every $j = 1, \ldots, M$, the equation
\begin{equation}\label{9}
(T^0_\varepsilon - I) q_j (\frac{x}{\varepsilon}) =   v(\frac{x}{\varepsilon}, \frac{x_j}{\varepsilon}) - \alpha_{0j} {\bf 1}, \quad x \in \varepsilon B^{\sharp}, \; x_j \in \varepsilon \{x_j \}^{\sharp},
\end{equation}
which is of equivalent the following equation on  $ B^\sharp$:
\begin{equation}\label{P2_4}
(P^0 - I) q_j (y) = v (y, y_j) - \alpha_{0j} {\bf 1}, \quad  y \in B^\sharp, \; y_j \in A^\sharp,
\end{equation}
where ${\bf 1}(y) = 1 \; \forall y \in B^\sharp$, and $q_j$ is $Y$-periodic.
Using Fredholm' alternative we conclude that the equation (\ref{P2_4}) has a unique solution if
$$
v(y,y_j)  - \alpha_{0j} {\bf 1} \ \bot \mbox{ Ker } (P^0 - I)^\ast = \{ {\bf 1} \}.
$$
The last relation follows from the irreducibility of $P^0$ on $B^\sharp$.
This condition implies the unique choice of constants $\alpha_{0j}$
\begin{equation}\label{alpha}
\alpha_{0j} \ = \ \frac{1}{|B|} \ \sum_{y \in B} v(y,y_j)>0 \quad \mbox{ with } \; y_j \in A,
\end{equation}
where $|B|$ is the cardinality of the set $B$. Thus $\alpha_{0j}$ is defined by (\ref{alpha}) for every $j =1, \ldots, M$, and the equation (\ref{P2_4}) has a unique solution $q_j(y)$ that is a bounded periodic function on the set $B^\sharp$.
Proposition \ref{P2} is proved.
\end{proof}
\bigskip

\begin{proof}[Proof of Proposition \ref{P1}]

We say that $y \sim x, \; x,y \in \mathbb Z^d$, if $p_0 (x,y) \neq 0$. Let $\Lambda_x$ be a finite set  of $y\in \mathbb Z^d$ such that $y \sim x$.
We will use further the notation
$$
p_0 (x,y) = p_0 (x, x+ \xi) = p_\xi(x) \; \mbox{ for all } \; x \sim y, \; x,y \in \mathbb Z^d, \; \mbox{ with } \;  y =x+ \xi.
$$
Then
$$
\sum_{\xi \in \Lambda_x} p_\xi (x)=1,
$$
and
\begin{equation}\label{Pxi1}
(T_0^\varepsilon f)(x) \ = \ \sum_\xi p_\xi (\frac{x}{\varepsilon}) f(x+ \varepsilon \xi), \quad x \in \varepsilon B^{\sharp}.
\end{equation}

Using (\ref{FP}) we get for all  $x \in \varepsilon B^{\sharp}$:
\begin{equation}\label{P1_1}
L_\varepsilon^0 F_\varepsilon^P (x) =  \frac{1}{\varepsilon^2} (T_\varepsilon^0 - I) \left( f_0 (x) + \varepsilon \left(\nabla f_0 (x), h(\frac{x}{\varepsilon}) \right) \right) +  (T_\varepsilon^0 - I) \left(\nabla \nabla f_0 (x), g(\frac{x}{\varepsilon})\right).
\end{equation}
Then the vector function $h(\frac{x}{\varepsilon})$ is taken from the relation
\begin{equation}\label{P1_2}
\frac{1}{\varepsilon^2} (T_\varepsilon^0 - I) \left( f_0 (x) + \varepsilon \left(\nabla f_0 (x), h(\frac{x}{\varepsilon}) \right) \right) = O(1).
\end{equation}
Using (\ref{Pxi1}) we obtain that the left-hand side of (\ref{P1_2}) takes the form:
\begin{equation}\label{P1_3}
\frac{1}{\varepsilon^2} \sum_\xi p_\xi (\frac{x}{\varepsilon}) \left( f_0 (x+ \varepsilon \xi) - f_0(x) \right)  +
\frac{1}{\varepsilon} \sum_\xi p_\xi (\frac{x}{\varepsilon})
\left( \left(\nabla f_0 (x+\varepsilon \xi), h(\frac{x}{\varepsilon}+\xi) \right) - \left(\nabla f_0 (x), h(\frac{x}{\varepsilon}) \right)  \right)
\end{equation}
$$
= \frac{1}{\varepsilon} \sum_\xi p_\xi (\frac{x}{\varepsilon}) \left( \nabla f_0 (x), \xi \right)  +
\frac{1}{\varepsilon} \sum_\xi p_\xi (\frac{x}{\varepsilon})
\left( \nabla f_0 (x), h(\frac{x}{\varepsilon}+\xi) - h(\frac{x}{\varepsilon}) \right)  + O(1)
$$
$$
= \frac{1}{\varepsilon}   \left( \nabla f_0 (x), \sum_\xi p_\xi (\frac{x}{\varepsilon}) \left(  \xi + ( h(\frac{x}{\varepsilon}+\xi) - h(\frac{x}{\varepsilon}) \right) \right)  + O(1).
$$
Thus the vector function $h(x)$ is a solution of the equation
\begin{equation}\label{P1_4}
(P^0 - I) \left( l (x) + h(x) \right) =0, \quad x \in B,
\end{equation}
 where $l(x)=x$ is the linear function. The solvability condition for equation (\ref{P1_4}) reads
$$
((P^0 - I)l,  \mbox{ Ker } (P^0 - I)^\ast) = ((P^0 - I) l ,\ {\bf 1}) = \sum_{x \in B} \sum_\xi p_\xi(x) \xi = 0.
$$
Since $p_{\xi}(x) = p_{-\xi}(x+\xi)$, this condition holds true, which implies the existence of the unique, up to an additive constant,  solution $h(x)$ of equation (\ref{P1_4}).

\bigskip

We follow the similar reasoning to find an equation for the matrix function $g(x), \ x \in B$. Collecting in (\ref{P1_1}) all terms of the order $O(1)$ and using  relation (\ref{P1_4}) on the  function $h(x)$ we get:
$$
\frac{1}{\varepsilon^2} \sum_\xi p_\xi (\frac{x}{\varepsilon}) \left( f_0 (x+ \varepsilon \xi) - f_0(x) \right)  +
\frac{1}{\varepsilon} \sum_\xi p_\xi (\frac{x}{\varepsilon})
\left( \left(\nabla f_0 (x+\varepsilon \xi), h(\frac{x}{\varepsilon}+\xi) \right) - \left(\nabla f_0 (x), h(\frac{x}{\varepsilon}) \right)  \right)
$$
$$
+ \sum_\xi p_\xi (\frac{x}{\varepsilon}) \left( \left( \nabla \nabla f_0 (x + \varepsilon \xi), g(\frac{x}{\varepsilon} + \xi) \right) -
 \left( \nabla \nabla f_0 (x ), g(\frac{x}{\varepsilon} ) \right) \right) + O(\varepsilon)
$$
$$
= \frac{1}{\varepsilon}   \left( \nabla f_0 (x), \sum_\xi p_\xi (\frac{x}{\varepsilon}) \left(  \xi + ( h(\frac{x}{\varepsilon}+\xi) - h(\frac{x}{\varepsilon}) \right) \right)
$$
$$
+ \left( \nabla \nabla f_0 (x), \sum_\xi p_\xi (\frac{x}{\varepsilon}) \left(  \frac12 \xi \otimes \xi + \xi \otimes  h(\frac{x}{\varepsilon}+\xi)  + (g(\frac{x}{\varepsilon} + \xi) - g(\frac{x}{\varepsilon} ) ) \right) \right) + O(\varepsilon)
$$
\begin{equation}\label{P1_5}
= \left( \nabla \nabla f_0 (x), \sum_\xi p_\xi (\frac{x}{\varepsilon}) \left(  \frac12 \xi \otimes \xi + \xi \otimes  h(\frac{x}{\varepsilon}+\xi) \right)  + (P^0 -I) g(\frac{x}{\varepsilon})   \right) + O(\varepsilon).
\end{equation}
Let $\frac{x}{\varepsilon} = y \in B$, and denote by $\Phi(h)$ the following matrix function
\begin{equation}\label{P1_6}
\Phi(h)(y) =   \frac12 \sum_\xi p_\xi (y)\, \xi \otimes \xi + \sum_\xi p_\xi (y)\, \xi \otimes h(y+ \xi), \quad y \in B.
\end{equation}
In order to ensure the convergence in (\ref{PP1}) we should find a constant matrix $\Theta$ and a periodic matrix
function $g(y)$ such that
\begin{equation}\label{P1_7}
\Phi(h)_{km}(y) + (P^0 - I) g_{km}(y) = \Theta_{km},
\end{equation}
The solvability condition for (\ref{P1_7}) reads
$$
(-\Phi(h)_{km} + \Theta_{km}, \ Ker (P^0 - I)^\ast) = (-\Phi(h)_{km} + \Theta_{km} ,\ {\bf 1})  = 0,
$$
thus $\Theta$ is uniquely defined as follows:
$$
\Theta_{k m} = \frac{1}{|B|} \sum_{y \in B} \Phi_{k m}(h) (y),
$$
and $g(y)$ is a solution of equation \eqref{P1_7}. This solution is uniquely defined up to a constant matrix.

\begin{proposition}\label{S1}
The matrix $\Theta$ defined by
\begin{equation}\label{theta}
\Theta = \frac{1}{|B|} \sum_{y \in B} \Phi (h) (y), \quad \mbox{ where } \quad \Phi(h)(y) = \sum_{\xi = \xi(y)} p_\xi (y)\, \xi \otimes \left( \frac12  \xi  +  h(y + \xi) \right)
\end{equation}
is  positive definite, i.e. $(\Theta \eta, \eta)>0 \; \forall \eta \neq 0$.
\end{proposition}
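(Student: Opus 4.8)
The plan is to reduce the positive-definiteness of $\Theta$ to an energy (Dirichlet form) representation of the quadratic form $\eta\mapsto(\Theta\eta,\eta)$, and then to exploit the irreducibility of $P^0$ on $B^\sharp$ together with the fact that $B^\sharp$ is unbounded. First, for a fixed $\eta\in\mathbb R^d$ I would introduce the scalar corrector $\chi(y)=(h(y),\eta)$ and the function $\psi(y)=(y,\eta)+\chi(y)$, with $h$ the corrector supplied by Proposition \ref{P1}. Contracting the corrector equation \eqref{P1_4} with $\eta$ and using $(\xi,\eta)=(y+\xi,\eta)-(y,\eta)$, one sees that $\sum_\xi p_\xi(y)(\psi(y+\xi)-\psi(y))=0$ for all $y\in B^\sharp$, i.e. $\psi$ is $P^0$-harmonic on $B^\sharp$, while $\chi$ stays bounded and periodic.

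Next I would establish the identity
$$
(\Theta\eta,\eta)=\frac1{2|B|}\sum_{y\in B}\sum_\xi p_\xi(y)\,\bigl(\psi(y+\xi)-\psi(y)\bigr)^2 .
$$
Expanding the definition \eqref{theta}, the term $\tfrac12(\xi,\eta)^2$ already matches the corresponding term of the square, so only the cross terms remain. Substituting $(\xi,\eta)=\bigl(\psi(y+\xi)-\psi(y)\bigr)-\bigl(\chi(y+\xi)-\chi(y)\bigr)$ reduces the claim to the single identity
$$
\sum_{y\in B}\sum_\xi p_\xi(y)(\xi,\eta)\,\chi(y)=\tfrac12\sum_{y\in B}\sum_\xi p_\xi(y)\bigl(\chi(y+\xi)-\chi(y)\bigr)^2 .
$$
This follows by multiplying the corrector equation by $\chi(y)$, summing over one period, and performing a discrete integration by parts based on the symmetry $p_\xi(y)=p_{-\xi}(y+\xi)$ (a restatement of $p_0(x,y)=p_0(y,x)$): the change of variables $(y,\xi)\mapsto(y+\xi,-\xi)$ sends $\sum_{y,\xi}p_\xi(y)\chi(y)\bigl(\chi(y+\xi)-\chi(y)\bigr)$ to its own negative, which together with the telescoping of the squared increment yields the identity. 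This is the main computational step and precisely where the symmetry of $P^0$ is essential.

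Granted the energy representation, nonnegativity of $(\Theta\eta,\eta)$ is immediate. For strict positivity I would argue by contradiction: if $(\Theta\eta,\eta)=0$, then $\psi(y+\xi)=\psi(y)$ for every edge with $p_\xi(y)>0$, so $\psi$ is constant along the connectivity graph of $P^0$; by irreducibility of $P^0$ on $B^\sharp$ it is constant on all of $B^\sharp$. Writing $\Gamma\subset\mathbb Z^d$ for the full-rank lattice of periods and fixing $y_0\in B$, periodicity of $\chi$ gives $\psi(y_0+\gamma)-\psi(y_0)=(\gamma,\eta)$ for every $\gamma\in\Gamma$, so constancy of $\psi$ forces $(\gamma,\eta)=0$ on a basis of $\mathbb R^d$, whence $\eta=0$.

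The hard part is the integration-by-parts identity establishing the energy representation, which hinges on the symmetry of $P^0$. The only subtlety in the positivity step is that the linear part $(y,\eta)$ must fail to be bounded on $B^\sharp$ while $\chi$ stays bounded — which is exactly why the hypothesis that $B^\sharp$ is an unbounded (full-rank periodic) connected set enters the argument.
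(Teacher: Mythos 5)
Your proposal is correct and follows essentially the same route as the paper: both reduce $(\Theta\eta,\eta)$ to the discrete Dirichlet energy $\frac{1}{2|B|}\sum_{y\in B}\sum_{\xi}p_\xi(y)\bigl(\psi(y+\xi)-\psi(y)\bigr)^2$ of $\psi(y)=(y,\eta)+(h(y),\eta)$ by combining the corrector equation with a summation by parts resting on $p_\xi(y)=p_{-\xi}(y+\xi)$, and your strict-positivity step (harmonicity of $\psi$, irreducibility of $P^0$ on $B^\sharp$, and full rank of the period lattice forcing $\eta=0$) actually makes explicit what the paper's Step 3 only asserts. One small imprecision: the change of variables $(y,\xi)\mapsto(y+\xi,-\xi)$ does not send $\sum_{y,\xi}p_\xi(y)\chi(y)\bigl(\chi(y+\xi)-\chi(y)\bigr)$ to its own negative but to $-\sum_{y,\xi}p_\xi(y)\chi(y+\xi)\bigl(\chi(y+\xi)-\chi(y)\bigr)$; it is the average of the two expressions that yields the required $-\tfrac12\sum_{y,\xi}p_\xi(y)\bigl(\chi(y+\xi)-\chi(y)\bigr)^2$.
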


\begin{proof}
Step 1.  We prove that
\begin{equation}\label{step1}
\sum_{\xi = \xi(y)} \partial_{-\xi} a(y) \partial_{\xi} g(y) = -2(P^0 - I)g(y),
\end{equation}
where we denote $\partial_{\xi} g(y) = g(y+\xi) - g(y)$ for every $\xi= \xi(y)$, and $ a(y) = a_{\xi \xi}(y) = p_{\xi} (y)$ is the diagonal matrix. Using
$$
a(y) \partial_{\xi} g(y) = p_\xi (y) (g (y+\xi) - g(y)) \quad \mbox{ and } \;  p_{\xi} (y- \xi) =  p_{-\xi} (y),
$$
we obtain (\ref{step1}):
$$
\sum_{\xi} \partial_{-\xi} a(y) \partial_\xi g (y) = \sum_{\xi} \left( p_\xi (y-\xi) (g (y) - g(y- \xi))  - p_\xi (y) (g (y+\xi) - g(y)) \right) =
$$
$$
\sum_{\xi} \left(  p_{-\xi} (y) (g (y) - g(y- \xi)) - p_{\xi} (y) (g (y+\xi) - g(y)) \right) =  -2(P^0 - I) g(y).
$$
\medskip

Step 2. From (\ref{P1_4}) and (\ref{step1}) it follows that
$$
\sum_{\xi} \partial_{-\xi} a(y) \partial_\xi (l+h)(y) = 0, \quad y \in B.
$$
Consequently, for all $\eta \in \mathbb R^d$ we get
\begin{equation}\label{step2}
0 = \left( \sum_{y \in B} h(y) \sum_{\xi\in \Lambda_y} \partial_{-\xi} a(y) \partial_\xi (l+h)(y) \eta, \eta \right) = \left( \sum_{y \in B} \sum_{\xi\in \Lambda_y}  \partial_{\xi} h(y) a(y) \partial_\xi (l+h)(y) \eta, \eta \right).
\end{equation}

\bigskip

Step 3. On the other hand we have a positive definite quadratic form
\begin{equation}\label{step3}
\left( \sum_{y \in B} \sum_{\xi\in \Lambda_y} \partial_{\xi} (l+h)(y) a(y) \partial_\xi (l+h)(y) \eta, \eta \right)>0 \quad \forall \eta \neq 0,
\end{equation}
since $a(y) = \{p_\xi (y)\}$ is the diagonal matrix. Using (\ref{step2}) and (\ref{step3}) we have
\begin{equation}\label{step3.1}
\left( \sum_{y \in B} \sum_{\xi\in \Lambda_y} \partial_{\xi} l(y) a(y) \partial_\xi (l+h)(y) \eta, \eta \right)
=\left(\sum_{y \in B} \sum_{\xi\in \Lambda_y} p_\xi (y)\, \xi\otimes\left( \xi + h(y+ \xi) - h(y) \right) \eta, \eta \right) >0.
\end{equation}
Let us observe that
$$
- \sum_{ y \in B} \sum_{\xi\in \Lambda_y} p_\xi(y)\, \xi\otimes h(y) = \sum_{y \in B}  \sum_{\xi\in \Lambda_y}   p_{-\xi} (z)  (-\xi)\otimes h(z- \xi) =
\sum_{z \in B}  \sum_{\xi\in \Lambda_z}  p_{\xi} (z)\,  \xi\otimes h(z+ \xi);
$$
here we set $z = y + \xi$ and use the identity $
p_\xi (y) = p_{-\xi}(z)$.
Finally, the expression in (\ref{step3.1}) can be written as
$$
\left(\sum_{y \in B} \sum_{\xi\in \Lambda_y}  p_\xi (y)\, \xi\otimes\left( \xi + 2 h(y+ \xi) \right) \eta, \eta \right) =
$$
\begin{equation}\label{step3.2}
2 \left(\sum_{y \in B} \sum_{\xi\in \Lambda_y} p_\xi (y)\, \xi \otimes  \left( \frac12 \xi + h(y+ \xi) \right) \eta, \eta \right) =
2 \left(\sum_{y \in B} \Phi(h)(y) \eta, \eta \right) >0.
\end{equation}

\end{proof}

This complete the proof of Proposition \ref{P1}.

\end{proof}


\begin{thebibliography}{10}

\bibitem{Ale92} G. Allaire, Homogenization and two-scale convergence, {\it SIAM J. Math. Anal.},
 {\bf 23}(1992), pp. 1482--1518.

\bibitem{ADH90} T. Arbogast, J. Douglas, and U. Hornung, Derivation of the double porosity model of
single phase flow via homogenization theory, {\it SIAM J. Math. Anal.}, {\bf 21}(1990), pp. 823--836.

\bibitem{BKS08} N. Babych, I. Kamotski, and V. Smyshlyaev, Homogenization of spectral problems
in bounded domains with doubly high contrasts, {it Networks and Heterogeneous Media}, {\bf 3}(2008),
pp. 413--436.


\bibitem{BLM} A. Bourgeat, S. Luckhaus, and A. Mikelic, Convergence of the Homogenization Process for a Double-Porosity Model of Immiscible Two-Phase Flow, {\it SIAM J.  Math. Analysis}, {\bf  27}, 1996,  pp. 1520--1543.



\bibitem{BCP15} A. Braides, V. Chiado Piat, and A. Piatnitski, Homogenization of discrete
high-contrast energies, {\it SIAM J. Math. Anal.} {\bf 47}(2015)  pp. 3064--3091.

\bibitem{BCP04} A. Braides, V. Chiado Piat, and A. Piatnitski, A variational approach to
double-porosity problems, {\it Asymptot. Anal.}, {\bf 39}(2004), pp. 281--308.

\bibitem{EK} S. N. Ethier, T. G. Kurtz, Markov processes: Characterization and convergence. Wiley $\&$ Sons, 2005.

\bibitem{Ko86} S. Kozlov, Averaging of difference schemes, {\it Math. USSR Sb.}, {\bf 57}(1987), pp.
351--370.

\bibitem{Zhi00} V. V. Zhikov, On an extension of the method of two-scale convergence and
its applications, {\it Sb. Math.}, {\bf 191}(2000), pp. 973--1014.



\end{thebibliography}
\end{document}